\newcommand{\iso}{\mathrel{\overset{1:1}{\smash{\longrightarrow}\vrule height.3ex width0ex\relax}}}
\def\ac{{\overline{\rm ac}}}
\def\rv{ rv }
\def\11{{\mathbf 1}}
\def\QQ{{\mathbb Q}}
\def\RR{{\mathbb R}}
\def\ZZ{{\mathbb Z}}
\def\cL{{\mathcal L}}
\def\cM{{\mathcal M}}
\def\cO{{\mathcal O}}
\mathchardef\alphag="7C0B
\mathchardef\betag="7C0C
\mathchardef\gammag="7C0D
\mathchardef\deltag="7C0E
\mathchardef\varepsilong="7C22
\mathchardef\varphig="7C27
\mathchardef\psig="7C20
\mathchardef\zetag="7C10
\mathchardef\epsilong="7C0F
\mathchardef\rhog="7C1A
\mathchardef\taug="7C1C
\mathchardef\upsilong="7C1D
\mathchardef\iotag="7C13
\mathchardef\thetag="7C12
\mathchardef\pig="7C19
\mathchardef\sigmag="7C1B
\mathchardef\etag="7C11
\mathchardef\omegag="7C21
\mathchardef\kappag="7C14
\mathchardef\lambdag="7C15
\mathchardef\mug="7C16
\mathchardef\xig="7C18
\mathchardef\chig="7C1F
\mathchardef\nug="7C17
\mathchardef\varthetag="7C23
\mathchardef\varpig="7C24
\mathchardef\varrhog="7C25
\mathchardef\varsigmag="7C26
\mathchardef\Omegag="7C0A
\mathchardef\Thetag="7C02
\mathchardef\Sigmag="7C06
\mathchardef\Deltag="7C01
\mathchardef\Phig="7C08
\mathchardef\Gammag="7C00
\mathchardef\Psig="7C09
\mathchardef\Lambdag="7C03
\mathchardef\Xig="7C04
\mathchardef\Pig="7C05
\mathchardef\Upsilong="7C07
\newtheorem{thmintro}{Theorem}
\newtheorem{thm}[subsection]{Theorem}
\newtheorem{lem}[subsection]{Lemma}
\newtheorem{prop}[subsection]{Proposition}
\theoremstyle{definition}
\newtheorem{defn}[subsection]{Definition}
\newtheorem{exintro}{Example}
\newtheorem{def-prop}[subsection]{Proposition-Definition}
\newtheorem{def-thm}[subsection]{Theorem-Definition}
\newtheorem{def-lem}[subsection]{Lemma-Definition}
\theoremstyle{remark}
\theoremstyle{plain}
\numberwithin{equation}{subsection}
\def\boxit#1#2{\setbox1=\hbox{\kern#1{#2}\kern#1}
\dimen1=\ht1 \advance\dimen1 by #1
\dimen2=\dp1 \advance\dimen2 by #1
\setbox1=\hbox{\vrule height\dimen1 depth\dimen2\box1\vrule}
\setbox1=\vbox{\hrule\box1\hrule}
\advance\dimen1 by .4pt \ht1=\dimen1
\advance\dimen2 by .4pt \dp1=\dimen2 \box1\relax}
\newcommand{\sq}{\mathrel{\square}}
\newcommand{\ord}{\operatorname{ord}}
\begin{document}

\setcounter{tocdepth}{2} 

\title[Approximations in $p$-adic geometry]{Approximations and Lipschitz continuity in $p$-adic semi-algebraic and subanalytic geometry}

\author{Raf Cluckers}
\address{Universit\'e Lille 1, Laboratoire Painlev\'e, CNRS - UMR 8524, Cit\'e Scientifique, 59655
Villeneuve d'Ascq Cedex, France, and,
Katholieke Universiteit Leuven, Department of Mathematics,
Celestijnenlaan 200B, B-3001 Leu\-ven, Bel\-gium\\}
\email{raf.cluckers@math.univ-lille1.fr}
\urladdr{http://math.univ-lille1.fr/$\sim$cluckers}

\author{Immanuel Halupczok}
\address{Institut f\"ur Mathematische Logik und Grundlagenforschung
Universit\"at M\"unster\\
Einsteinstra\ss{}e 62\\
48149 M\"unster\\
Germany}
\email{math@karimmi.de}
\urladdr{http://www.immi.karimmi.de/en.math.html}

\begin{abstract}
It was already known that a $p$-adic, locally Lipschitz continuous semi-algebraic function is
piecewise Lipschitz continuous, where the pieces can be taken semi-algebraic.
We prove that if the function has locally Lipschitz constant $1$, then it is also
piecewise Lipschitz continuous with the same Lipschitz constant $1$. We do this by proving the following
fine preparation results for $p$-adic semi-algebraic functions in one variable.
Any such function can be well approximated by a monomial with fractional exponent such that moreover
the derivative of the monomial is an approximation of the derivative of the function.
We also prove these results in parametrized versions and in the subanalytic setting.
\end{abstract}

\maketitle

\section*{Introduction}

In several senses, $p$-adic manifolds do not have curvature, they are flat. For example, any $p$-adic analytic submanifold of $\QQ_p^n$ is locally analytically isometric to an open ball in $p$-adic Euclidean space. In this paper we show another phenomenon related to this absence of curvature, namely related to Lipschitz continuity. On the reals, a locally Lipschitz continuous function with constant $\varepsilon$, say on a sufficiently nice domain $X$ in $\RR^n$, can be expected to be globally Lipschitz continuous, but usually this happens only with a bigger Lipschitz constant $C>\varepsilon$; see e.g.\ \cite{Kurdyka} or Example~\ref{exR} below. The increase of the Lipschitz constant when passing from local to global in the reals can happen for example for a function defined on a circle. In the context of semi-algebraic or subanalytic sets on the $p$-adics, one does not need to increase the Lipschitz constant to pass from local to global, but to encompass the total disconnectedness, one has to break the domain into finitely many pieces on each of which the function becomes Lipschitz continuous (with the original Lipschitz constant).
More precisely, we prove:
\begin{thmintro}\label{thm1}
Let $f\colon X\subset \QQ_p^n\to\QQ_p$ be a semi-algebraic function. Suppose that, locally around each point $x\in X$, the function $f$ is Lipschitz continuous with Lipschitz constant $\varepsilon$. Then there exists a finite partition of $X$ into semi-algebraic parts $A_i$ such that each of the restrictions $f_{|A_i}$ is Lipschitz continuous with constant $\varepsilon$ (globally on the part $A_i$). The same property is true when one replaces semi-algebraic by subanalytic.
\end{thmintro}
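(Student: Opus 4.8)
The plan is to reduce the statement to a one-variable problem, which is then settled by a sharp preparation-and-approximation analysis. Using $p$-adic cell decomposition (in the semi-algebraic, resp.\ subanalytic, language) one partitions $X$ into finitely many cells, and by induction on $n$ it suffices to treat a cell $X$ lying over a cell $C\subset\QQ_p^{n-1}$ with a single center $c\colon C\to\QQ_p$, so that the fibre $X_{\bar x}$ over $\bar x\in C$ is a union of balls around $c(\bar x)$ of controlled radii. The mechanism for passing from one variable to $n$ variables is the ultrametric equality $|x-y|=\max_i|x_i-y_i|$: if the partition is refined enough that on each piece $A$ one has, uniformly, both that $t\mapsto f(\bar x,t)$ is $\varepsilon$-Lipschitz on each fibre and that $\bar x\mapsto f(\bar x,t)$ is $\varepsilon$-Lipschitz on each horizontal slice $A\cap(\QQ_p^{n-1}\times\{t\})$, then for $(\bar x_1,t_1),(\bar x_2,t_2)\in A$ one interpolates through $(\bar x_1,t_2)$ to get $|f(\bar x_1,t_1)-f(\bar x_2,t_2)|\le\max(\varepsilon|t_1-t_2|,\varepsilon|\bar x_1-\bar x_2|)=\varepsilon|(\bar x_1,t_1)-(\bar x_2,t_2)|$. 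The slice condition is the inductive hypothesis (local $\varepsilon$-Lipschitz continuity is plainly inherited by horizontal slices), but it must be made uniform in $t$, so what one really proves by induction is a parametrized form of the statement; and the interpolation point $(\bar x_1,t_2)$ must stay in $A$, which is why one works with cells possessing a common center. A subtlety appears already here: straightening such a cell by $t\mapsto t-c(\bar x)$ need not be $1$-Lipschitz, so one first refines the partition so that $c$ is prepared and, on each piece, is either $1$-Lipschitz or of variation negligible against the scales that occur.

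\textbf{The one-variable core: fine preparation.}
After a further cell decomposition the one-variable domain is a finite union of points and of $1$-cells $\{t: v(a_1)\le v(t-c)<v(a_2),\ t-c\in\lambda\cdot(\QQ_p^\times)^N\}$ with constant center $c$. On each piece one establishes the fine preparation result announced in the abstract: $f(t)=a\cdot u(t)\cdot(t-c)^{q}$ for a rational exponent $q$ of fixed sign and fixed $p$-adic valuation on the piece, with $u$ a unit tending to $1$ as the piece is shrunk, and---the new ingredient---with $f'(t)$ simultaneously approximated by $q\,a\,(t-c)^{q-1}$ in the same sense. Granting this, the local Lipschitz hypothesis is used as follows: on the dense open set where $f$ is $C^1$, the elementary non-archimedean Taylor estimate $f(t')-f(t)=f'(t)(t'-t)+O(|t'-t|^2)$ (with a uniform constant after refinement) shows that a local Lipschitz constant $\le\varepsilon$ at a point forces $|f'|\le\varepsilon$ there, hence $|q\,a\,(t-c)^{q-1}|\le\varepsilon$ on the whole piece. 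For $t_1,t_2$ in the piece one then writes $f(t_1)-f(t_2)$ through $(t_1-c)^{q}-(t_2-c)^{q}$: when $|t_1-t_2|<|t_1-c|=|t_2-c|$ one expands $(t_i-c)^{q}=(t_1-c)^{q}(1+\delta_i)^{q}$ with $\delta_i=(t_i-t_1)/(t_1-c)$ small, the leading binomial term gives $|f(t_1)-f(t_2)|=|q\,a\,(t_1-c)^{q-1}|\cdot|t_1-t_2|\le\varepsilon|t_1-t_2|$, and the remaining terms are smaller provided $|q|_p=1$; when $|t_1-c|\ne|t_2-c|$ the larger monomial dominates and one is left, up to the factor $|q|_p^{-1}$, with the same bound.

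\textbf{Ramification and a second partition.}
The monomial shape thus yields $|f(t_1)-f(t_2)|\le\bigl(\sup|f'|/|q|_p\bigr)\,|t_1-t_2|$ on the relevant sub-pieces, which is the desired bound exactly when $|q|_p=1$; when $p$ divides the numerator of $q$ there is a loss by the factor $|q|_p^{-1}$---the prototype being $t\mapsto a\,t^{p}$ on a ball containing $0$, whose global Lipschitz constant exceeds $\sup|f'|$ by the factor $p$, so that even a single ball must be cut up. To absorb this loss one notes two things. On the region of the cell where $|f'|\le|q|_p\,\varepsilon$, the displayed estimate already gives $|f(t_1)-f(t_2)|\le\varepsilon|t_1-t_2|$ with no further work, and this region is a single ball or a finite union of annuli, because $|f'|\approx|q\,a\,(t-c)^{q-1}|$ is monotone in $v(t-c)$. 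On the complementary region, where $|q|_p\,\varepsilon<|f'|\le\varepsilon$, the same monotonicity confines $v(t-c)$ to a \emph{bounded} range, so that it consists of finitely many balls and annuli; cutting each of these into balls of radius one step below the local Lipschitz radius forces the leading binomial term to dominate, and on each such ball $|f(t_1)-f(t_2)|=|f'(t_1)|\,|t_1-t_2|\le\varepsilon|t_1-t_2|$. Altogether this is a finite partition on which $f$ is globally $\varepsilon$-Lipschitz.

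\textbf{Parameters, the subanalytic case, and the main obstacle.}
To feed the one-variable analysis into the induction one carries all of it out in families over an auxiliary parameter space---this is the reason for the parametrized versions of the preparation theorems---and, since the cell decomposition and preparation machinery are available verbatim in the subanalytic language, the last sentence of the theorem needs no separate argument. I expect the principal obstacle to be the fine preparation theorem itself: producing, for a one-variable semi-algebraic (or subanalytic) function, a monomial approximation \emph{whose derivative simultaneously approximates the derivative of the function}, uniformly in parameters. The Lipschitz estimates afterwards are delicate bookkeeping, but it is this strengthened preparation, tracking first derivatives and not merely values, that makes them go through with the constant kept exactly $\varepsilon$.
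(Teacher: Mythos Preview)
Your overall strategy matches the paper's: a fine preparation theorem approximating $f$ and $f'$ simultaneously by fractional monomials, a one-variable Lipschitz estimate in which finitely many thin pieces are excised to absorb the loss, and an induction on the number of variables using cells with $1$-Lipschitz centers, all carried out in parametrized form. You also correctly single out the strengthened preparation (derivative control, not merely value control) as the principal obstacle. In the one-variable core your bookkeeping differs from the paper's---you measure the loss by $|q|_p^{-1}$ coming from the monomial exponent, whereas the paper compares the depths $\ell,\ell'$ of the domain and image cells and obtains the factor $q_K^{\ell'-\ell}$---but both routes reach the same conclusion: a bounded range of levels must be split off as thin cells, each of which is then handled directly by the Jacobian property.

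The reduction step, however, has a genuine gap. You claim that for $(\bar x_1,t_1),(\bar x_2,t_2)\in A$ one interpolates through $(\bar x_1,t_2)$, and that a common center keeps this point in $A$. It does not: even after straightening by a $1$-Lipschitz center, a cell has the shape $\{(\bar x,s):\bar x\in C,\ |\alpha(\bar x)|\sq_1|s|\sq_2|\beta(\bar x)|,\ s\in\lambda Q_{m,n}\}$, and since $\alpha,\beta$ vary with $\bar x$ there is no reason for $s_2$ to meet the bounds at $\bar x_1$. Concretely, in the cell $\{(\bar x,s):|\bar x|<|s|\}$ the points $(p^{-2},p^{-3})$ and $(1,p^{-1})$ belong, but $(p^{-2},p^{-1})$ does not. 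The paper does not attempt a two-point interpolation; for $m>1$ it invokes the argument of \cite{CCL}, which partitions into cells in which one can move along carefully chosen lower-dimensional subsets and apply the inductive hypothesis there. The $1$-Lipschitz center is indeed the key input---it makes the straightening an isometry---but it feeds a more delicate traversal than a single intermediate point, and your sketch would need that machinery to close.
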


In fact we prove a parametrized version of Theorem~\ref{thm1}, see Theorem~\ref{rellip}.
Theorem~\ref{thm1} is false if one would replace $\QQ_p$ by $\RR$; see Example~\ref{exR} below.

Let us sketch the $p$-adic context.
Let $K$ be a finite field extension of the field of $p$-adic numbers $\QQ_p$. One of the main results of \cite{CCL} states that if $f\colon X\subset K^n\to K$ is a semi-algebraic (resp.~subanalytic) function which is locally Lipschitz continuous with constant $\varepsilon$, then there exists a finite partition of $X$ into semi-algebraic  (resp.~subanalytic) pieces $A$ such that the restriction $f_{|A}$ of $f$ to any of the pieces is globally Lipschitz continuous with possibly some bigger Lipschitz constant $C$; this is the $p$-adic analogue of one of Kurdyka's results in \cite{Kurdyka}.
Theorem~\ref{thm1} states that one can choose the finite partition in such a way that one can take $C=\varepsilon$. As in \cite{CCL}, this is proved by induction on $n$, and hence, a family (that is, parametrized) version of this Lipschitz continuity result is more natural and flexible to work with, see Theorem~\ref{rellip}.

The main ingredient in \cite{CCL} is a preparation result stating that the domain of a
definable function $f$ can be cut into nice pieces (cells) on which $f$ behaves nicely.
The proof of our Theorem~\ref{rellip} is essentially the same, the main difference being that
we need a finer piecewise preparation result for $f$.
Indeed, the preparation result used in \cite{CCL} is
Proposition~3.12 of \cite{CCL} (which is finer than the more classical cell decomposition results); this proposition in some sense gives a compatible cell decomposition of the domain of
$f\colon X\subset \QQ_p^n\to\QQ_p$ and its image, when $n=1$.
We refine that proposition to get a Preparation Theorem~\ref{thm:simulPrep:basic} which allows one to approximate $f$ in a piecewise way by monomials with fractional exponents such that at the same time the derivative of $f$ is approximated by the derivatives of the monomials, still for $n=1$. In fact, the Preparation Theorem~\ref{thm:simulPrep:basic} treats this property in semi-algebraic, resp.~subanalytic, families.

The real situation is different, as is shown by the following example.
\begin{exintro}\label{exR}
Let $g\colon S^1\to\RR$ be the function on the unit circle in the real plane that sends $z\in S^1$ to the arc length between $z$ and a fixed point on $S^1$. Let $X$ be the open annulus $\{x\in\RR^2\mid 1< |x| <2 \}$ and let $f\colon X\to\RR$ be the function which sends $x\in X$ to $g(z)/r$, where $x=r\cdot z$ with $z\in S^{1}$ and $1 < r < 2$. Then $f$ is globally subanalytic and locally Lipschitz continuous with constant $1$, but there does not exist a finite partition of $X$ into parts on which $f$ is Lipschitz continuous with constant $1$. One is obliged to replace $1$ by $e$ for some real $e>1$ in order to find a finite partition of $X$ into parts on which $f$ is $e$-Lipschitz continuous.
\end{exintro}

\section{Some general definitions}

Let $K$ be a valued field, with (additively written) value group $\Gamma$ and valuation map $\ord\colon K^\times\to\Gamma$. Put the valuation topology on $K^n$ for $n\geq 1$. For $X\subset K$ open, a function $f\colon X\to K$ is called $C^1$ if $f$ is differentiable at each point of $X$ and the derivative $f'\colon X\to K$ of $f$ is continuous (this notion of $C^1$ is more naive than the one of H.~Gl\"ockner \cite{Glock}, but suffices for our purposes).

Write $\cO_K$ for the valuation ring of $K$, with maximal ideal $\cM_K$. Further, write $RV_K$ for the union of the quotient $K^\times/(1+\cM_K)$ with $\{0\}$, and $rv\colon K\to RV_K$ for the natural quotient map $K^\times\to K^\times/(1+\cM_K)$ extended by $rv(0)=0$. Similarly, write $RV_{K,n}$ for the union of the quotient $K^\times/(1+\cM^n_K)$ with $\{0\}$, and $rv_n\colon K\to RV_{K,n}$ for the natural map.

A ball in $K$ is by definition a set of the form
$
\{t \in K\mid \ord ( t - a ) \geq z\}
$
for some $a\in K $ and some $z \in \Gamma$.

\begin{defn}[Jacobian property]\label{defjacprop}
Let $F\colon B_1\to B_2$ be a function with
$B_1,B_2\subset K$. Say that $F$
\textit{has the Jacobian property} if the following
conditions (a) up to (d) hold:
\begin{itemize}

\item[(a)] $F$ is a bijection $B_1\to B_2$ and $B_1$ and $B_2$ are balls;

\item[(b)] $F$ is $C^1$ on $B_1$;

\item[(c)] $\ord (\partial F/\partial x)$ is constant (and finite) on $B_1$;

\item[(d)] for all $x,y\in B_1$ with $x\not=y$, one has
$$
\ord (\partial F/\partial x)+\ord(x-y)=\ord(F(x)-F(y)).
$$
\end{itemize}
\end{defn}

\begin{defn}[$n$-Jacobian property]\label{defnjacprop}
Let $F\colon B_1\to B_2$ be a function with
$B_1,B_2\subset K$ and let $n>0$ be an integer. Say that $F$
\textit{has the $n$-Jacobian property} if $F$ has the Jacobian property,
and moreover the following stronger versions of conditions (c) and (d) hold:
\begin{itemize}
\item[(c')] $rv_n (\partial F/\partial x)$ is constant (and nonzero) on $B_1$;

\item[(d')] for all $x,y\in B_1$ one has
$$
rv_n (\partial F/\partial x)\cdot rv_n(x-y)=rv_n(F(x)-F(y)).
$$
\end{itemize}
\end{defn}

For convenience, $0$-Jacobian property will just mean Jacobian property.

\begin{defn}[Local $n$-Jacobian property]\label{deflocjacprop}
Let $f\colon X\to Y$ be a function with
$X,Y\subset K$ and let $n>0$ be an integer. Say that $f$
\textit{has the local $n$-Jacobian property} if $f$ is a bijection and for each ball $B\subset X$ and for each ball $B'\subset Y$, the restrictions $f_{|B}\colon B\to f_{|B}(B)$ and $(f^{-1})_{|B'}\colon B'\to f^{-1}_{|B'}(B')$ have the $n$-Jacobian property.
\end{defn}

\begin{defn}[Lipschitz continuity]\label{def:lip}
Given two metric spaces $(X, d_X)$ and $(Y, d_Y)$, where $d_X$
denotes the metric on the set $X$ and $d_Y$ the metric on
$Y$, a function $f\colon  X \to Y$ is called Lipschitz continuous if there
exists a real constant $C \geq  0$ such that, for all $x_1$ and
$x_2$ in $X$,
$$
    d_Y(f(x_1), f(x_2)) \leq C\cdot d_X(x_1, x_2).
$$
In the above case, we also call $f$
Lipschitz continuous with Lipschitz constant $C$, or just $C$-Lipschitz continuous. If there is a constant $C$
such that each $x\in X$ has a neighbourhood on which the function $f$ is
$C$-Lipschitz continuous, then $f$ is called locally Lipschitz
continuous with constant $C$, or just locally $C$-Lipschitz
continuous.
\end{defn}

\section{Definable sets over the $p$-adics}\label{defpadic}

From now on,
let $K$ be a finite field extension of $\QQ_p$ with valuation ring
$\cO_K$ and valuation map $\ord\colon K^\times\to\ZZ$. We set $\vert x \vert :=
q_K^{- \ord (x)}$ and $\vert 0 \vert = 0$, where $q_K$ is the cardinality of
the residue field of $K$. Further we choose a uniformizer  $\pi_K$ of $\cO_K$ and for each integer $n>0$, we let $\ac_n\colon K\to \cO_K/(\pi_K^n)$ be the multiplicative map sending $0$ to $0$ and any nonzero $x$ to $x \pi_K^{-\ord(x)}\bmod (\pi_K^n)$.

We recall the notion of (globally) subanalytic subsets of $K^n$
and of semi-algebraic subsets of $K^n$. Let $\cL_{\rm
Mac}=\{0,1,+,-,\cdot,\{P_n\}_{n>0}\}$ be the language of Macintyre and
$\cL_{\rm an}=\cL_{\rm Mac}\cup
\{^{-1},\bigcup_{m>0}K\{x_1,\ldots,x_m\}\}$, where $P_n$ stands for the
set of $n$th powers in $K^\times$,
where $^{-1}$ stands for the field inverse extended to $0$ by $0^{-1}=0$,
where $K\{x_1,\ldots,x_m\}$ is
the ring of restricted power series over $K$ (that is, formal power
series over $K$ converging on
$\cO_K^m$),
 and each element $f$ of $K\{x_1,\ldots,x_m\}$ is interpreted
as the restricted analytic function $K^m\to K$ given by
\begin{equation}
x\mapsto
\begin{cases}
 f(x) & \mbox{if }x\in
\cO_K^m \\
0 & \mbox{else.}
\end{cases}\end{equation}
By subanalytic we mean $\cL_{\rm an}$-definable and by semi-algebraic we mean $\cL_{\rm Mac}$-definable
with parameters from $K$. Note that
semi-algebraic, resp.~subanalytic, sets can be given by a quantifier free formula
with parameters from $K$ in the language $\cL_{\rm Mac}$,
resp.~$\cL_{\rm an}$ by \cite{Macint}, resp.~\cite{DvdD} and \cite{DHM}.

From now on we choose one of the two notions: semi-algebraic or subanalytic, and by \emph{definable} we will mean
semi-algebraic, resp.~subanalytic, according to our fixed choice.

For integers $m>0$ and $n>0$, let $Q_{m,n}$ be the (definable) set
\[
Q_{m,n}:=\{x\in K^\times \mid \ord (x) \in n\ZZ,\ \ac_m(x)=1\}.
\]
For $\lambda\in K$ let
$\lambda  Q_{m,n}$
denote $\{\lambda
x\mid x\in Q_{m,n}\}$. The sets $Q_{m,n}$ are a variant of Macintyre's predicates $P_\ell$ of $\ell$th powers; the corresponding notions of cells are slightly different but equally powerful and similar in usage, since any coset of $P_\ell$ is a finite disjoint union of cosets of some $Q_{m,n}$ and vice versa.

 \begin{defn}[$p$-adic cells]\label{def::cell}
Let $Y$ be a definable set.
A cell $A\subset K\times Y$ over $Y$ is a (nonempty) set of
the form
 \begin{equation}\label{cel}
A=\{(t,y)\in K\times Y\mid y\in Y',\ |\alpha(y)| \sq_1 |t-c(y)| \sq_2
|\beta(y)|,\
  t-c(y)\in \lambda Q_{m,n}\},
\end{equation}
with $Y'$ a definable set,
constants $n>0$, $m>0$, $\lambda$
in $K$, $\alpha,\beta\colon Y'\to K^\times$ and $c\colon Y'\to K$ all
definable functions, and $\sq_i$ either $<$ or no
condition, and such that $A$ projects surjectively onto $Y'\subset Y$.
 We call $c$ the center of the cell $A$, $\lambda Q_{m,n}$ the coset
of $A$, $\alpha$ and $\beta$ the boundaries of $A$, and $Y'$ the
base of $A$. If $\lambda=0$ we call $A$ a $0$-cell
and if $\lambda\not=0$ we call $A$ a $1$-cell.
Call a $1$-cell $A$ unbounded if at least one of the $\sq_i$ in (\ref{cel}) is no condition.
\end{defn}
Sometimes, one additionally requires that $Y'$ is a $K$-analytic manifold
or a cell, but for the purposes of this article, this is not necessary.

\section{The main results}\label{s:results}

In the following two definitions, we introduce
some new notions which will be needed to formulate our version of the Preparation Theorem.
The first notion is that of a ``fractional monomial''---the functions by which we will approximate
arbitrary definable functions. A fractional monomial (``with center $0$'')
is supposed to be something like $t \mapsto et^q$ for some $e \in K$ and $q \in \QQ$.
The following definition makes this precise and moreover allows for parameters.

\begin{defn}[Fractional monomials]\label{fracm}
Let $A\subset K\times Y$ be a cell over $Y$ with center $c$, as in (\ref{cel}). A fractional monomial on $A$ with center $c$ is a continuous, definable function $m\colon A\to K$ such that there exist a definable map $e\colon Y\to K$ and coprime integers $a$ and $b$ with $b>0$ such that for all $(t,y)\in A$
\[
m(t,y)^b = e(y)  (t- c(y))^a.
\]
We use the conventions that $b=1$ whenever $a=0$, that $a=0$ whenever $A$ is a $0$-cell, and that $0^0=1$.
If furthermore $e(y)$ is nonzero for some
 $y$ and some $t$ with $(t,y)\in A$,
 then $a/b$ is independent of any choices and we call $a/b$ the exponent of $m$. In any case we call $e$ the coefficient of $m$.
\end{defn}

Note that, although the center $c$ of a cell $A$ is usually not unique, we assume that cells and fractional monomials on the cells have the same (sometimes implicitly fixed) centers.

Now we define what it should mean for a function to be approximated
by another function, e.g.\ a fractional monomial. This notion only makes sense
on cells, and it only makes sense if both functions ``are compatible'' with that cell;
our notion of $0$-compatibility is essentially the same
as the compatibility notion of Proposition 3.12 of  \cite{CCL}.

\begin{defn}
Let $A\subset K\times Y$ be a $1$-cell over $Y$, let $f\colon A\to K$ be definable, and let $n\geq 0$ be an integer. Write
$$f\times {\rm{id}}\colon A\to K\times Y\colon (t,y)\mapsto (f(t,y),y)$$
 and
$$A_f=(f\times {\rm{id}})(A).
 $$
 Say that $f$ is $n$-compatible with the cell $A$ if either $A_f$ is a $0$-cell over $Y$, or the following holds: $A_f$ is a $1$-cell over $Y$
and for each $y\in Y$, the function $f_y\colon A_y\to f_y(A_y)\colon t\mapsto f(t,y)$ has the local $n$-Jacobian property.

If $g\colon A\to K$ is a second function which is $n$-compatible with the cell $A$ and if we have
\[
A_f=A_g \mbox{ and } rv_n(\frac{\partial f(t,y)}{\partial t}) = rv_n(\frac{\partial g(t,y)}{\partial t}) \mbox{ on }A,
\]
then we say that $f$ and $g$ are $n$-equicompatible with $A$.

If $A'\subset K\times Y$ is a $0$-cell over $Y$ (instead of a $1$-cell), any definable function $h\colon A'\to K$ is said to be $n$-compatible with $A'$, and $h$ and $k\colon A'\to K$ are $n$-equicompatible with $A'$ if and only if $h=k$.
\end{defn}

\begin{thm}[$n$-Preparation Theorem]\label{thm:simulPrep:basic}
Let $X \subset  K\times Y$ and
$f_j\colon  X \to K$ be definable for
$j=1,\ldots,r$ and let $n\geq 0$ be an integer.
Then there exists a finite partition of $X$ into cells $A$ over $Y$ such that the restriction $f_{j|A}$ is $n$-compatible with $A$
for each
cell $A$ over $Y$ and for each $j$, and if one writes $d_j$ for the center of $A_{f_j}$, then there exists a fractional monomial $m_j$ on $A$ such that the functions $d_j+m_j$ and $f_{j|A}$ are $n$-equicompatible with $A$. 

If moreover $A$ is unbounded, then the fractional monomials $m_{j}$ are unique.
\end{thm}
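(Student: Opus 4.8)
The plan is to prove Theorem~\ref{thm:simulPrep:basic} by induction on $n$, using the classical (resp.\ $0$-)preparation result as the base case and then refining the cell decomposition step by step so that at level $n$ the residue-ring-at-precision-$n$ behaviour of the derivatives is matched by monomials. Concretely, for $n=0$ one invokes Proposition~3.12 of \cite{CCL}: after a finite partition of $X$ into cells $A$ over $Y$ one may assume each $f_{j|A}$ is $0$-compatible with $A$ (i.e.\ $A_{f_j}$ is a cell over $Y$ and fiberwise $f_j$ has the local Jacobian property, or $A_{f_j}$ is a $0$-cell). In the $0$-cell case there is nothing to do ($m_j=0$, $a=0$). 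In the $1$-cell case one replaces $f_j$ by $f_j-d_j$ where $d_j$ is the center of $A_{f_j}$, which reduces us to a function whose image-cell has center $0$; then the standard analysis of cells (writing $|f_j(t,y)|$ as comparable to a power of $|t-c(y)|$ times $|\text{coefficient}|$, and using that on a coset $\lambda Q_{m,n}$ the order of $t-c(y)$ lives in a single congruence class mod $n$) lets one extract, after a further refinement of the cell, a fractional monomial $m_j(t,y)^b=e(y)(t-c(y))^a$ with $|f_j-d_j|$ and $|m_j|$ comparable, and—crucially, because on the refined cell $\ord(\partial f_j/\partial t)$ is an affine function of $\ord(t-c(y))$—one can arrange $\ord(\partial f_j/\partial t)=\ord(\partial m_j/\partial t)$ as well. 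That is precisely $0$-equicompatibility of $d_j+m_j$ with $f_{j|A}$.

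For the inductive step, suppose the statement holds for $n-1$; apply it to the family $\{f_j\}$ to get cells $A$ on which $d_j+m_j$ and $f_{j|A}$ are $(n-1)$-equicompatible. One now wants to refine each $A$ further so as to upgrade ``$rv_{n-1}$ of the derivatives agree'' to ``$rv_n$ of the derivatives agree,'' and simultaneously upgrade the fiberwise local $(n-1)$-Jacobian property to the local $n$-Jacobian property. The key observation is that on a single cell the function $rv_n(\partial f_j/\partial t)$, viewed as a function of $(t,y)$, is itself definable, and definable functions into the finite-fibered set $RV_{K,n}$ (over each congruence class of $\ord$) are, after a finite partition, constant along the $t$-direction on each fiber (this is a compactness/finiteness statement in the valued field, the $RV$-analogue of the fact that a definable function to a finite set is piecewise constant). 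So refine $A$ so that $rv_n(\partial f_j/\partial t)$ depends only on $y$ (and on which ``level'' of the cell one is at, but on a single coset $\lambda Q_{m,n}$ the relevant data is controlled). Having done this, the local $n$-Jacobian property for the fibers follows from the local Jacobian property (already available from $0$-compatibility) together with constancy of $rv_n$ of the derivative: condition (d$'$) is obtained by integrating (c$'$) along a ball, exactly as (d) follows from (c) in the ordinary Jacobian property. Finally one must re-choose the fractional monomial so that $rv_n(\partial m_j/\partial t)=rv_n(\partial f_j/\partial t)$: since $\partial m_j/\partial t = (a/b)e(y)^{1/b}(t-c(y))^{a/b-1}\cdot(\text{unit})$, modifying $e(y)$ by a suitable definable function $u(y)$ with $rv_n(u(y))$ prescribed adjusts $rv_n$ of the derivative without disturbing the order estimates, so one can hit the target value; one checks this does not conflict with $m_j$ being a genuine fractional monomial (the $b$-th power relation is preserved after possibly passing to a finer cell where $u$ has a $b$-th root, or by absorbing into $e$).

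The main obstacle, and the place where real care is needed, is the interplay between the three things that must hold simultaneously on the \emph{same} cell $A$: (i) $f_{j|A}$ has the fiberwise local $n$-Jacobian property, (ii) the order/norm of $f_j-d_j$ is genuinely comparable to that of a monomial $m_j$ so that the image $A_{f_j}$ is again a cell of the required shape with center $d_j$, and (iii) the derivatives agree up to $rv_n$. Items (ii) and (iii) pull in slightly different directions—matching the derivative at precision $n$ might force a choice of coefficient $e(y)$ that one then needs to check is still compatible with the boundary functions $\alpha,\beta$ of the cell $A_{f_j}$—so the bookkeeping of how a refinement of $A$ propagates to a refinement of $A_{f_j}$, and vice versa (one may need to refine $A_{f_j}$ as a cell over $Y$ and pull back), is the technical heart. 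One also has to handle the $0$-cell degeneration uniformly (when $\partial f_j/\partial t$ vanishes identically on a fiber, or $f_j$ is locally constant), treating it by the convention $a=0$, $m_j=0$. For the very last assertion, uniqueness on unbounded cells: if $A$ is unbounded then $|t-c(y)|$ ranges over an unbounded (or unbounded-below) set of norms on each fiber, so two fractional monomials $m_j$, $m_j'$ with $d_j+m_j$ and $d_j+m_j'$ both $n$-equicompatible with $f_{j|A}$ have $|m_j|$ and $|m_j'|$ comparable on an infinite range of norms of $t-c(y)$ \emph{and} equal $rv_n$ of derivative; comparing exponents forces the exponents $a/b$ to coincide (two distinct fractional powers of $|t-c(y)|$ cannot stay within a bounded ratio over an unbounded range), and then comparing $rv_n$ of derivatives—hence $rv_n$ of $e(y)^{1/b}$ up to the fixed constant $(a/b)$—pins down $e$, so $m_j=m_j'$. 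On a bounded cell this argument fails because the range of $|t-c(y)|$ is a single bounded interval and one can trade off exponent against coefficient, which is why uniqueness is only claimed in the unbounded case.
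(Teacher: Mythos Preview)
Your strategy---induction on $n$, with a piecewise-constancy argument for $rv_n$ of the derivative---is genuinely different from the paper's, and as written it contains a real gap.

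The paper does not induct on $n$. It obtains $n$-compatibility for all $f_j$ at once by observing that Proposition~3.12 of \cite{CCL} upgrades verbatim to $n$-compatibility once one uses the $n$-Jacobian property supplied by \cite{CLip}. After that, classical cell decomposition gives a monomial $m_j$ with $rv_n(f_j-d_j)=rv_n(m_j)$. The heart of the argument is then two Banach-fixed-point lemmas (Lemmas~\ref{lem:equal-v} and~\ref{lem:equal-rvn}): if two functions with the ($n$-)Jacobian property on a common ball have \emph{different} $\ord$ (resp.\ $rv_n$) of their derivatives, then they coincide at a \emph{unique} point of that ball. Applied to $f_j$ and the candidate $d_j+m_j$, this shows that each ball on which the derivatives fail to match carries an isolated definable point; these form a finite set in every fiber and are split off as finitely many thin cells, each of which is handled directly by a linear approximation coming from the $n$-Jacobian property. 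On the remaining (non-thin) cell, after a minor adjustment of the coefficients $e_j$, one then has simultaneously $A_{f_j}=A_{d_j+m_j}$ and $rv_n(\partial f_j/\partial t)=rv_n(\partial m_j/\partial t)$.

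Your proposal has two specific problems. First, the sentence ``condition (d$'$) is obtained by integrating (c$'$) along a ball, exactly as (d) follows from (c)'' is false: in non-archimedean analysis there is no mean value inequality for naive $C^1$ functions, and constancy of $rv_n(f')$ on a ball does \emph{not} imply $rv_n(f(x)-f(y))=rv_n(f')\cdot rv_n(x-y)$; nor does (d) follow from (c). The $n$-Jacobian property is a genuinely additional piece of information, obtained in the paper from \cite{CLip}, not by integration. Second, you correctly flag the tension between ``$A_{f_j}=A_{d_j+m_j}$'' and ``$rv_n(\partial f_j/\partial t)=rv_n(\partial m_j/\partial t)$'' as the technical heart, but you do not resolve it: multiplying $m_j$ by a unit to match $rv_n$ of the derivative changes the coset of the image cell $A_{d_j+m_j}$ and will in general destroy the equality $A_{f_j}=A_{d_j+m_j}$. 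The paper's fixed-point lemmas are precisely the device that confines the obstruction to finitely many thin cells; your sketch supplies no substitute for this step.

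Your uniqueness argument on unbounded cells is fine.
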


In the above theorem, the center $d_j$ of $A_{f_j}$ is identified with the function on $A_{f_j}$ sending $(t,y)\in A_{f_j}$ to $d_j(y)$.

One can compare our results to the classically known cell decomposition theorem (due to Cohen \cite{cohen}, Denef \cite{D84}, \cite{Dcell}, and the first author \cite{Ccell}).

\begin{thm}[Classical $p$-adic Cell Decomposition]\label{thm:CellDecomp}
Let $X\subset K\times Y$ and $f_j\colon X\to K$ be definable for
$j=1,\ldots,r$ and let $n\geq 1$ be an integer. Then there exist a finite partition of $X$ into
cells $A$ over $Y$ and for each occurring $1$-cell $A$ fractional monomials $m_{j}$ on $A$ such that
 \begin{equation*}
 rv_n(f_j(t,y))=rv_n(m_{j}(t,y))\quad
 \mbox{ for each }(t,y)\in A.
 \end{equation*}
 \end{thm}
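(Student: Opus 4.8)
The plan is to reduce the statement to the preparation of a single polynomial in one variable, via quantifier elimination, and then to prepare polynomials by means of their roots, using the cosets $\lambda Q_{m,n}$ to control angular components; this is the route of Cohen \cite{cohen} and Denef \cite{D84}, \cite{Dcell}, adapted to the predicates $Q_{m,n}$ as in \cite{Ccell}. First I would reduce to the case $r=1$. Given cell decompositions adapted to the individual $f_j$, one takes their common refinement; although the intersection of two cells need not be a cell, every finite Boolean combination of cells can be repartitioned into finitely many cells, so it suffices to treat one function at a time and intersect the resulting decompositions. Next, by Macintyre's quantifier elimination \cite{Macint} (resp.\ the analytic quantifier elimination of \cite{DvdD} and \cite{DHM} in the subanalytic case), the graph of $f=f_1$ is quantifier-free definable. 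On suitable definable pieces of $X$ the value $f(t,y)$ is therefore obtained from finitely many polynomials $g_i(t,y)$ in $t$ (resp.\ restricted analytic functions) by the field operations together with the extraction of $\ell$-th roots on the sets where $P_\ell$ holds. Since $rv_n$ is multiplicative and root extraction is exactly what the cosets $\lambda Q_{m,n}$ are designed to keep single-valued and $\ac_n$-constant, it is enough to prepare the finitely many $g_i$ simultaneously so that each becomes $rv_n$-equal to a fractional monomial, and then to assemble.

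The core step is the preparation of one polynomial $g(t,y)$. Passing to a finite extension of $K$, I would factor $g(t,y)=u(y)\prod_k(t-a_k(y))$ with definable roots $a_k$, and partition the parameter set $Y$ into definable pieces on which the orders $\ord a_k(y)$, the orders of the differences $\ord(a_k(y)-a_l(y))$, and the relevant angular components are all constant; on each such piece one selects a center $c(y)$ among the roots. Cutting the $t$-line into the finitely many annuli around $c$ determined by the root-distances, on each annulus the roots split into those that are ``far'' from $c$, each contributing a term $\ord(t-c)$, and those that are ``near'', each contributing a constant in $y$; hence $\ord g$ is an affine function of $\ord(t-c)$ on the annulus. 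Refining the cell so that $t-c(y)$ runs through a single coset $\lambda Q_{m,n}$ then fixes $\ac_n(t-c)$ and hence $\ac_n(g)$, and one reads off a fractional monomial $m$ with $m^b=e(y)(t-c)^a$ satisfying $rv_n(g)=rv_n(m)$ on the cell; the fractional exponent $a/b$ and the index $b$ arise from the root count and from the chosen coset.

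In the subanalytic case the only change is the source of this factorization: one replaces the algebraic splitting by Weierstrass preparation, so that on suitable cells a restricted analytic function equals a unit times a polynomial in $t$, after which the polynomial analysis applies verbatim; the uniform version needed here is precisely the analytic cell decomposition underlying \cite{DvdD} and \cite{DHM}. Assembling the prepared $g_i$ through the field operations and the coset-controlled root extractions, and using that $rv_n$ respects these operations, produces the desired fractional monomial $m_j$ on each $1$-cell, while on a $0$-cell there is nothing to prove since $t=c(y)$ is then determined by $y$.

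I expect the main obstacle to be the uniform, parametrized control of the angular component $\ac_n$ — that is, upgrading equality of orders to equality of $rv_n$ — and the bookkeeping of annuli and cosets needed to guarantee that the outcome is a single genuine fractional monomial rather than finitely many branches; in the subanalytic setting the added difficulty is carrying out the Weierstrass preparation definably and uniformly in the parameters $y$.
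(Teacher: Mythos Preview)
Your outline is essentially the classical Cohen--Denef argument (with the analytic variant via Weierstrass preparation), and it is correct as a proof route. However, the paper does not reprove Theorem~\ref{thm:CellDecomp} along these lines: it treats the result as known (citing \cite{cohen}, \cite{D84}, \cite{Dcell}, \cite{Ccell}) and only \emph{indicates} how it would follow from the new $n$-Preparation Theorem~\ref{thm:simulPrep:basic}. That indication is a two-line argument: on each cell $A$ produced by Theorem~\ref{thm:simulPrep:basic}, either the center $d_j$ of $A_{f_j}$ can be taken to vanish, in which case $rv_n(f_j)=rv_n(m_j)$ directly, or else $rv_n(f_j)=rv_n(d_j)$ on $A$ and $d_j$ itself serves as the fractional monomial. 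Note, though, that the paper explicitly uses Theorem~\ref{thm:CellDecomp} as an input to the proof of Theorem~\ref{thm:simulPrep:basic}, so this derivation is illustrative rather than a logically independent proof; the genuine proof in the paper's logical order is precisely the citation to the classical sources whose method you have sketched.

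On the sketch itself, two points deserve care. First, writing $g(t,y)=u(y)\prod_k(t-a_k(y))$ with \emph{definable} roots after passing to a finite extension is not quite how Denef proceeds: the roots need not be individually definable over $K$, and the actual argument works instead by induction on the degree together with Hensel-type approximation of a root on suitable pieces, so that a definable center can be extracted without full factorization. Second, fixing the coset of $t-c(y)$ alone does not automatically pin down $\ac_n(g)$; one also has to control the angular components of the ``far'' factors $t-a_k(y)$, which requires an additional (but routine) refinement of the partition. Both issues are handled in the references you cite, so your plan is sound, just slightly idealized.
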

 We indicate how Theorem~\ref{thm:CellDecomp} follows from our new Theorem~\ref{thm:simulPrep:basic}. (In fact, also the stronger Proposition 3.12 of \cite{CCL} follows from Theorem~\ref{thm:simulPrep:basic}.) Take a partition of $X$ into cells $A$ as in Theorem~\ref{thm:simulPrep:basic}. By the definition of cells (applied to $A_{f_j}$) we may suppose that either $d_j$ is identically zero or that $rv_n(f_j)=rv_n(d_j)$ on $A$. In the second case one is done since clearly $d_j$ is a fractional monomial on $A$, and in the first case one has $rv_n(f_j)=rv_n(m_j)$ on $A$ and one is also done.
   Proposition 3.12 of \cite{CCL} is as the
   Cell Decomposition
   Theorem~\ref{thm:CellDecomp} with $r=1$ and $f=f_1$ and with
   the extra property that for each $1$-cell $A$ in the partition the restriction $f_{|A}$ is $0$-compatible with $A$.
 In fact, Proposition 3.12 of \cite{CCL} and Theorem~\ref{thm:CellDecomp} will be used to prove Theorem~\ref{thm:simulPrep:basic}.

Theorem~\ref{thm:simulPrep:basic} allows us to improve the main results (Theorems 2.1 and 2.3) of \cite{CCL} to the following.

\begin{thm}[Piecewise Lipschitz continuity]\label{rellip}
Let $\varepsilon>0$ be given. Let $Y$ be a definable set. Let
$f\colon X\subset K^m\times Y\to K$ be a definable function such
that for each $y\in Y$ the function $f(\cdot,y)\colon x\mapsto f(x,y)$ is
locally $\varepsilon$-Lipschitz continuous on $X_y=\{x\mid (x,y)\in X\}$. Then there exists a finite definable partition of $X$ into parts $A_i$ such that
for each $y\in Y$ and $i$ the restriction of $f(\cdot,y)$ to
$A_{iy}$ is (globally on $A_{iy}$) $\varepsilon$-Lipschitz continuous.
\end{thm}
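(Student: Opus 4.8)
The plan is to prove Theorem~\ref{rellip} by induction on $m$, following the strategy of \cite{CCL} but feeding in the finer Preparation Theorem~\ref{thm:simulPrep:basic} at the base case $m=1$. For $m=1$, the point is that after absorbing $Y$ into the parameter space, $f$ is a one-variable definable function in families, so we may apply Theorem~\ref{thm:simulPrep:basic} (with $n=0$, or with $n$ chosen later depending on $\varepsilon$ and $K$) to partition $X \subset K \times Y$ into cells $A$ over $Y$ on which $f_{|A}$ is $0$-equicompatible with $d + m_0$ for a fractional monomial $m_0$ with some center $c$, coefficient $e$, and exponent $q = a/b$. On each such cell the derivative $\partial f / \partial t$ has the same $\rv$ (or $\rv_n$) as $q\, e(y)\, (t-c(y))^{q-1}$ up to the derivative of the center $d$; the local $\varepsilon$-Lipschitz hypothesis forces, via the (local $n$-)Jacobian property, a bound on $\ord(\partial f/\partial t)$ pointwise, and hence (because the monomial approximates the derivative) a uniform bound $|\partial f/\partial t| \le \varepsilon$ on all of $A_y$ once the cell is taken small enough — this is precisely where the ordinary cell decomposition of \cite{CCL} would only give $|\partial f/\partial t| \le C$ for some larger $C$. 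Then the Jacobian property (condition (d)) upgrades the pointwise derivative bound to the genuine global Lipschitz bound $|f(x_1,y)-f(x_2,y)| \le \varepsilon |x_1 - x_2|$ on each cell, since on a ball the valuation of a difference of values equals $\ord(\partial f/\partial t) + \ord(x_1 - x_2)$.

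For the inductive step $m > 1$, I would split coordinates as $x = (x', x_m)$ with $x' \in K^{m-1}$, regard $x'$ as extra parameters together with $Y$, and apply the $m=1$ case to the function $x_m \mapsto f(x', x_m, y)$ to get a partition of $X$ into families of cells in the last coordinate on which $f$ is $\varepsilon$-Lipschitz in $x_m$ alone. One then has to control the $x'$-directions: this is handled exactly as in \cite{CCL}, by further partitioning so that on each piece the center $c(x',y)$ and the relevant data vary Lipschitz-continuously in $x'$, applying the induction hypothesis to the functions $x' \mapsto c(x',y)$ and to the ``coefficient'' data, and then combining the Lipschitz bound in the $x_m$-direction with the Lipschitz bound in the $x'$-directions using the ultrametric inequality — in the $p$-adic setting $|(u_1,v_1)-(u_2,v_2)| = \max(|u_1-u_2|,|v_1-v_2|)$, so an $\varepsilon$-bound in each block of directions yields an $\varepsilon$-bound overall, with no loss in the constant. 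This is the same mechanism that makes Theorem~\ref{thm1} fail over $\RR$, where the Euclidean norm does not interact with partial Lipschitz bounds in this clean way.

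The main obstacle is the base case, and specifically making sure that $0$-equicompatibility (or $n$-equicompatibility) really does transfer the Lipschitz constant without increase. The subtle point is that $\rv$ only records the valuation and the leading ``angular component'' up to $1+\cM_K$, so having $\rv(\partial f/\partial t) = \rv(\partial(d+m_0)/\partial t)$ gives $\ord(\partial f/\partial t) = \ord(q e(y)(t-c(y))^{q-1})$ exactly but only controls $|f(x_1)-f(x_2)|$ up to the Jacobian property's equality (d); one must check that the cell's boundary conditions $|\alpha(y)| \sq_1 |t-c(y)| \sq_2 |\beta(y)|$ can be refined so that this common valuation of the derivative is $\le -\log_{q_K}\varepsilon$ throughout, using the local hypothesis at, say, points near the boundary or an interior point of each ball. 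A secondary technical annoyance is bookkeeping the parameter $y$ uniformly — ensuring the partition is definable and finite and the constant $\varepsilon$ is genuinely uniform in $y$ — but this is routine given that Theorem~\ref{thm:simulPrep:basic} is already stated in definable families. I would therefore spend the bulk of the proof carefully on the one-variable Jacobian-property computation and then invoke the inductive scheme of \cite{CCL} essentially verbatim for the passage from $m=1$ to general $m$.
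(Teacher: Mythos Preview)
Your overall architecture---induction on $m$, feeding Theorem~\ref{thm:simulPrep:basic} into the base case $m=1$, and then running the inductive scheme of \cite{CCL} (jointly with a $1$-Lipschitz-center result, which the paper states as Proposition~\ref{Lipcenter})---matches the paper. The inductive step sketch is acceptable as a pointer to \cite{CCL}.

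There is, however, a genuine gap in your $m=1$ argument. You write that the Jacobian property (d) ``upgrades the pointwise derivative bound to the genuine global Lipschitz bound \dots\ on each cell, since on a ball the valuation of a difference of values equals $\ord(\partial f/\partial t)+\ord(x_1-x_2)$.'' But a fiber $A_y$ of a $1$-cell is in general \emph{not} a single ball: it is a union of the maximal balls $B_{t,y}$ of Proposition~\ref{ballcell}. The Jacobian equality (d) only applies when $x_1$ and $x_2$ lie in the \emph{same} such ball. When $x_1\in B_{x_1,y}$ and $x_2\in B_{x_2,y}$ are in different balls of the cell, nothing you have written bounds $|f(x_1,y)-f(x_2,y)|$ by $\varepsilon|x_1-x_2|$, and this is exactly where the ordinary cell decomposition of \cite{CCL} loses control of the constant. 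Also, the ``subtle point'' you single out---refining the cell so that $|\partial f/\partial t|\le\varepsilon$ holds throughout---is not the issue: since the Jacobian property makes $f(\cdot,y)$ $C^1$, the local $\varepsilon$-Lipschitz hypothesis already forces $|\partial f/\partial t|\le\varepsilon$ everywhere, no refinement needed.

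What the paper does in the cross-ball case is this. After translating so that the centers $c$ and $d$ are zero, the balls of $A_y$ and of $(A_f)_y$ are described by fixed $\ac$-depths $\ell$ and $\ell'$ respectively. Comparing radii via (d) gives
\[
|f(x_1,y)-f(x_2,y)| \;\le\; \varepsilon\, q_K^{\ell'-\ell}\,|x_1-x_2|,
\]
so one is done if $\ell'\le\ell$. If $\ell'>\ell$, the $n$-equicompatibility with the fractional monomial $m$ is used essentially: since $|\partial f/\partial t|=|\partial m/\partial t|$ and $m$ has exponent $\ne 1$ in this case, $|\partial f/\partial t|$ is non-constant on $A_y$, and after removing finitely many thin cells (treated separately by the single-ball argument) one may arrange $|\partial f/\partial t|\le\varepsilon\,q_K^{-(\ell'-\ell)}$, which closes the gap. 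This cross-ball computation, not the derivative bound itself, is the heart of the $m=1$ case and is missing from your proposal.
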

The main point in Theorem~\ref{rellip} is that the constant of the Lipschitz continuity does not change when passing from the local to the piecewise global property.

\section{Proofs of the main results}\label{s:proofs}

We prove some auxiliary results first, after recalling the Banach Fixed Point Theorem in our setting (where we use the discreteness of the $p$-adic valuation to simplify its formulation).
Lemmas \ref{lem:equal-v} and~\ref{lem:equal-rvn} are key points in the proof of Theorem~\ref{thm:simulPrep:basic}.

\begin{lem}[Banach Fixed Point Theorem]\label{lem:banach} \label{lem:Ban}
Suppose that a function  $f\colon \cO_K^n \to \cO_K^n$
is contracting in the sense that
for any $x_1, x_2 \in \cO_K^n$ with $x_1\not=x_2$, $\ord (f(x_1) - f(x_2)) > \ord (x_1 - x_2)  $.
Then $f$ has exactly one fixed point, that is, a point $x\in \cO_K^n$ with $f(x)=x$.
\end{lem}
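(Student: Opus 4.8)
The plan is to prove existence and uniqueness separately, both exploiting the discreteness of the value group $\ZZ$. For \emph{uniqueness}, suppose $x$ and $x'$ are two distinct fixed points. Then $\ord(x-x')=\ord(f(x)-f(x'))>\ord(x-x')$, a contradiction; so there is at most one fixed point. This step is immediate and poses no difficulty.

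For \emph{existence}, I would build a Cauchy sequence by iteration. Pick any $x_0\in\cO_K^n$ and set $x_{k+1}=f(x_k)$. Let $\delta_k=\ord(x_{k+1}-x_k)\in\ZZ\cup\{+\infty\}$. If some $x_{k+1}=x_k$ we already have a fixed point, so assume $x_{k+1}\neq x_k$ for all $k$. Applying the contraction hypothesis to the pair $x_k,x_{k-1}$ gives $\delta_k=\ord(f(x_k)-f(x_{k-1}))>\ord(x_k-x_{k-1})=\delta_{k-1}$. Hence $(\delta_k)$ is a strictly increasing sequence of integers, so $\delta_k\to+\infty$. By the ultrametric inequality, for $\ell>k$ we get $\ord(x_\ell-x_k)\geq\min_{k\leq i<\ell}\delta_i=\delta_k\to+\infty$, so $(x_k)$ is Cauchy. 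Since $\cO_K^n$ is complete (as $K$ is a finite extension of $\QQ_p$) and closed, the sequence converges to some $x\in\cO_K^n$. It remains to check $f(x)=x$: the contraction hypothesis forces $f$ to be continuous (indeed $1$-Lipschitz, since $\ord(f(x_1)-f(x_2))>\ord(x_1-x_2)$ in particular gives $\ord(f(x_1)-f(x_2))\geq\ord(x_1-x_2)$), so $f(x)=f(\lim x_k)=\lim f(x_k)=\lim x_{k+1}=x$.

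I do not expect any real obstacle here; the only point requiring a moment's care is the bookkeeping with $+\infty$ as a possible value of $\ord$, i.e.\ the case where consecutive iterates coincide, which is handled by the trivial observation above. The essential simplification over the classical Banach theorem is that one does not need a contraction factor $<1$: in a discretely valued field, a strict decrease of distance at each step automatically forces the distances to a limit of $0$, because a strictly increasing sequence of integers is unbounded.
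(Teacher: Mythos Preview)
Your argument is correct. Uniqueness is immediate from the strict inequality, and for existence the key observation---that a strictly increasing sequence in $\ZZ$ is unbounded, so the iterates form a Cauchy sequence without any need for a uniform contraction factor---is exactly the point the paper alludes to when it says the discreteness of the valuation simplifies the formulation. Completeness of $\cO_K^n$ and continuity of $f$ then finish the job as you wrote.

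As for comparison with the paper: there is nothing to compare. The paper does not prove this lemma at all; it merely \emph{recalls} it (see the sentence introducing Section~\ref{s:proofs}: ``after recalling the Banach Fixed Point Theorem in our setting\ldots''). Your write-up is the standard Picard iteration proof and would serve perfectly well as the omitted argument.
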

%
%
%
%
%
%

\begin{lem}\label{lem:equal-v}
Suppose that $B, B_1, B_2 \subset K$ are balls,
that $f_1\colon B \to B_1$ and $f_2\colon B \to B_2$
both satisfy
the Jacobian property, and that $B_1 \cap B_2 \ne \emptyset$.
Suppose moreover $\ord(f_1') \ne \ord(f_2')$. Then
there exists exactly one element $b_0 \in B$ such that $f_1(b_0) = f_2(b_0)$.
\end{lem}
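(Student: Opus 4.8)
The plan is to reduce the statement to a fixed-point problem and then invoke the Banach Fixed Point Theorem (Lemma~\ref{lem:banach}). First I would set up the situation: a point $b_0\in B$ with $f_1(b_0)=f_2(b_0)$ is exactly a fixed point of the function $g:=f_2^{-1}\circ f_1$, provided this composition makes sense. For this we need $f_1(B)$ to land in $B_2$ (or at least in the range where $f_2^{-1}$ is defined), which is where the hypothesis $B_1\cap B_2\neq\emptyset$ comes in: after translating we may assume $B_1$ and $B_2$ are both centered near a common point, and since one of the two balls contains the other (balls in $K$ are linearly ordered by inclusion once they meet), we can restrict $f_1$ to a suitable sub-ball $B'\subset B$ so that $f_1(B')\subset B_1\cap B_2\subset B_2$ while keeping $f_1|_{B'}$ and $f_2|_{B'}$ still satisfying the Jacobian property (the Jacobian property passes to sub-balls and to restrictions of the image ball, by condition (d)). One must be a little careful that the unique fixed point found in $B'$ is actually the unique solution in all of $B$; but the Jacobian-property identity in (d) for $f_1$ and $f_2$ separately forces $\ord(f_1(x)-f_1(b_0))=\ord(f_1')+\ord(x-b_0)$ and similarly for $f_2$, so if $\ord(f_1')\neq\ord(f_2')$ then for $x\neq b_0$ the two sides $f_1(x)$ and $f_2(x)$ have different valuations (hence are unequal) at distance from the common value; running this the other way gives uniqueness globally in $B$ from uniqueness in any sub-ball.

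The heart of the argument is verifying that $g=f_2^{-1}\circ f_1\colon B'\to B'$ (after identifying $B'$ with $\cO_K$ via an affine coordinate change, and checking $g$ maps $B'$ into itself, which again uses that one ball contains the other together with the valuation computation of image radii from condition (d)) is contracting in the sense of Lemma~\ref{lem:banach}. For $x_1\neq x_2$ in $B'$, condition (d) applied to $f_1$ gives $\ord(f_1(x_1)-f_1(x_2))=\ord(f_1')+\ord(x_1-x_2)$, and condition (d) applied to $f_2^{-1}$ (which also has the Jacobian property, with $\ord((f_2^{-1})')=-\ord(f_2')$) gives $\ord(g(x_1)-g(x_2))=\ord(f_2^{-1}(f_1(x_1))-f_2^{-1}(f_1(x_2)))=-\ord(f_2')+\ord(f_1(x_1)-f_1(x_2))=\bigl(\ord(f_1')-\ord(f_2')\bigr)+\ord(x_1-x_2)$. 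So $g$ changes the valuation of differences by the \emph{constant} amount $\delta:=\ord(f_1')-\ord(f_2')$. If $\delta>0$ then $g$ is strictly contracting and Lemma~\ref{lem:banach} applies directly; if $\delta<0$ then instead $g^{-1}=f_1^{-1}\circ f_2$ is strictly contracting, and a fixed point of $g$ is the same as a fixed point of $g^{-1}$, so we apply the lemma to $g^{-1}$. Either way we get exactly one fixed point, hence exactly one $b_0$.

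The main obstacle I anticipate is bookkeeping around the domains: making sure the composition $f_2^{-1}\circ f_1$ is well-defined on a ball that maps into itself, so that Lemma~\ref{lem:banach} is literally applicable. This requires computing the radii of $f_i(B)$ from the constant value $\ord(f_i')$ and condition (d), then using $B_1\cap B_2\neq\emptyset$ (so one is contained in the other) to shrink $B$ to a sub-ball on which everything closes up; and then separately arguing that shrinking does not lose solutions, using the valuation identities above. The hypothesis $\ord(f_1')\neq\ord(f_2')$ is used in two places: to get strict contraction of $g$ or $g^{-1}$ (so that the fixed point is \emph{unique}, not just existent), and, via the (d)-identities, to rule out extra coincidence points outside the shrunken ball. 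Everything else is a routine unwinding of Definition~\ref{defjacprop}.
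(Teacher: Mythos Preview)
Your approach is correct and is essentially the same as the paper's: compose one map with the inverse of the other, verify contraction via condition~(d), and apply Lemma~\ref{lem:banach}. The paper's write-up is shorter because it makes one observation you circle around but never quite pin down: after assuming without loss of generality that $\ord(f_1')<\ord(f_2')$, the ball $B_2$ is contained in $B_1$, and then $f_1^{-1}\circ f_2$ is already a well-defined contracting self-map of \emph{all} of $B$. So the sub-ball $B'$, the restriction of the $f_i$, and the separate check that no solutions are lost outside $B'$ are all unnecessary; your case split on the sign of $\delta$ is exactly the WLOG step, and in each case the contracting composition is automatically globally defined on $B$. Your direct uniqueness argument from condition~(d) is correct but redundant once Banach is applied on $B$ itself.
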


\begin{proof}
By the Jacobian property we may without loss suppose that $\ord(f_1') < \ord(f_2')$ and thus $B_1 \supset B_2$.
Consider the map $f_1^{-1} \circ f_2\colon B \to B$.
By the chain rule for differentiation and the Jacobian property, this map is contracting, and thus, by the Banach Fixed Point Theorem~\ref{lem:Ban}, it has
exaclty one fixed point $b_0$, which is the point we are looking for.
\end{proof}

\begin{lem}\label{lem:equal-rvn} Let $B\subset K$ be a ball containing $0$ and let $f_1, f_2\colon B \iso B$ be definable functions satisfying the following for some integer $n\geq 1$:
\begin{enumerate}
\item\label{1}
both $f_1$ and $f_2$ have the $n$-Jacobian property;
\item\label{2}
$\rv_n (f_1') \ne \rv_n (f_2')$;
\item\label{3}
 $f_1(a)-f_2(a)  \in \cM_K^{n-1}B$ for some $a\in B$.
 \end{enumerate}
Then  there exists exactly one element $b_0 \in B$ such that $f_1(b_0) = f_2(b_0)$.
\end{lem}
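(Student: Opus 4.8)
The strategy is to reduce Lemma~\ref{lem:equal-rvn} to Lemma~\ref{lem:equal-v}. By condition~\eqref{1}, both $f_1$ and $f_2$ satisfy the (ordinary) Jacobian property, so the first thing to check is whether $\ord(f_1') \ne \ord(f_2')$. If that inequality holds, then Lemma~\ref{lem:equal-v} applies directly (with $B_1 = B_2 = B$, which certainly meet), and we get the unique $b_0$. So the real content is the case $\ord(f_1') = \ord(f_2')$; then condition~\eqref{2} says that $f_1'$ and $f_2'$ nonetheless differ modulo $1+\cM_K^n$, i.e.\ their $\ac_n$-values differ while their orders agree.

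In that remaining case I would again build a contracting self-map of $B$ and apply the Banach Fixed Point Theorem~\ref{lem:Ban}, but now I must extract contraction from the refined hypotheses rather than from a gap in valuations. After rescaling (using that $B$ is a ball containing $0$, so $B = \pi_K^z\cO_K$ for some $z$), normalize to $B = \cO_K$. Consider $g := f_1^{-1}\circ f_2\colon B\to B$; condition~\eqref{1} guarantees $f_1^{-1}$ has the $n$-Jacobian property, so $g$ does too (by the chain rule, using (c$'$) and (d$'$)). A fixed point of $g$ is exactly a point where $f_1 = f_2$. By the $n$-Jacobian property~(d$'$), for $x,y\in B$ we have $\rv_n(g(x)-g(y)) = \rv_n(g')\cdot\rv_n(x-y)$, and $\rv_n(g') = \rv_n(f_1')^{-1}\rv_n(f_2')$; since $\ord(f_1') = \ord(f_2')$ but $\rv_n(f_1')\ne\rv_n(f_2')$, the element $\rv_n(g')$ has order $0$ but is $\ne 1$ in $RV_{K,n}$, i.e.\ $g'\in\cO_K^\times$ with $\ac_n(g')\ne 1$. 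This is not yet enough for contraction on all of $B$ — $g$ need not even map into a smaller ball. So I would instead study the map $h(x) := x - (g(x) - x)/(\text{something})$, or more cleanly, iterate the observation that $g(x)-x$ has \emph{constant} $\rv_n$-value as $x$ ranges over any ball small enough (from (d$'$) applied to $g$ and to $\mathrm{id}$): precisely, $\rv_n(g(x)-g(y)) - \rv_n(x-y)$ being governed by $\rv_n(g'-1)$, which by hypothesis is a \emph{nonzero} element of $RV_{K,n}$ of order $0$.

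The cleanest route: set $\delta(x) := g(x) - x$. From (d$'$) for $g$ and the trivial Jacobian property of the identity, for $x\ne y$,
\[
\rv_n\bigl(\delta(x)-\delta(y)\bigr) \;=\; \rv_n\bigl((g'-1)\bigr)\cdot\rv_n(x-y),
\]
and $u := \rv_n(g'-1)$ is a unit (order $0$) since $\ac_n(g')\ne 1$ forces $\ord(g'-1)=0$. Hence $\delta$ is itself a bijection onto a ball with the $n$-Jacobian property and derivative of order $0$; in particular $\delta$ is a bijection $B\to\delta(B)$ where $\delta(B)$ is a ball of the same radius. Now use hypothesis~\eqref{3}: $\delta(a) = g(a)-a = f_1^{-1}(f_2(a)) - a$, and $f_1(a)-f_2(a)\in\cM_K^{n-1}B$ translates, via (d$'$) for $f_1$, into $g(a)-a = f_1^{-1}(f_2(a)) - f_1^{-1}(f_1(a))\in\cM_K^{n-1}B$ as well (the order of $f_1'$ cancels between numerator-type estimates). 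So $\delta(a)\in\cM_K^{n-1}B$, while $\delta\colon B\to\delta(B)$ is a radius-preserving bijection; therefore $\delta(B) = \delta(a) + \cM_K^{n-1}B \cdot(\text{unit})$... more carefully, $\delta(B)$ is the ball of radius $|B|$ centered at $\delta(a)$, so $0\in\delta(B)$ iff $|\delta(a)|\le|B|$, which holds. Thus there is a unique $b_0\in B$ with $\delta(b_0)=0$, i.e.\ $g(b_0)=b_0$, i.e.\ $f_1(b_0)=f_2(b_0)$, and uniqueness is immediate since $\delta$ is injective.

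\textbf{Main obstacle.} The delicate point is hypothesis~\eqref{3} and the exponent $n-1$ (not $n$): one must verify that "$f_1(a)-f_2(a)\in\cM_K^{n-1}B$" is exactly the condition needed to place $0$ inside the image ball $\delta(B)$, and to check that passing through $f_1^{-1}$ (via the $n$-Jacobian property, which controls $\rv_n$ but loses a factor when we only want $\rv$-level/ball-level information) does not degrade the estimate. Getting the bookkeeping of radii right — that $\delta$ genuinely preserves the radius of $B$ and that $\delta(a)$ lands in the sub-ball $\cM_K^{n-1}B$ — is where the argument has to be done with care; everything else is a routine application of the chain rule, the $n$-Jacobian property, and the Banach Fixed Point Theorem.
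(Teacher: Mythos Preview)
Your overall strategy---pass to $g := f_1^{-1}\circ f_2$, set $\delta := g - \mathrm{id}$, and show $0$ lies in the image $\delta(B)$---is sound and genuinely different from the paper's proof. The paper does not compose with $f_1^{-1}$ at all; instead it linearizes $f_1-f_2$ directly via the $n$-Jacobian estimate $\ord\bigl(f_i(x)-f_i'(0)x-f_i(0)\bigr)\ge \ord(x)+n$, assumes no solution exists, and derives a contradiction to the maximality of $\gamma:=\sup_x\ord(f_1(x)-f_2(x))$ by solving the linear equation $dx+c=0$ with $d=f_1'(0)-f_2'(0)$, $c=f_1(0)-f_2(0)$. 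Your route stays closer to the fixed-point spirit of Lemma~\ref{lem:equal-v}; the paper's is a bare-hands Newton step.

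That said, two concrete errors need repair. First, ``$\ac_n(g')\ne 1$ forces $\ord(g'-1)=0$'' is false (take $g'=1+\pi_K$, $n=2$); one only gets $0\le k:=\ord(g'-1)<n$. Writing $g(x)-g(y)=(x-y)\,G(1+\epsilon)$ with $G$ a representative of $\rv_n(g')$ and $\ord\epsilon\ge n$, one finds $\delta(x)-\delta(y)=(x-y)(G-1+G\epsilon)$, and $(G-1+G\epsilon)/(G-1)\in 1+\cM_K^{\,n-k}$ only. So $\delta$ is \emph{not} radius-preserving: it scales radii by $q_K^{-k}$, and your displayed $\rv_n$-identity for $\delta$ holds only at level $\rv_{n-k}$. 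Second, that $\delta(B)$ is a ball is not automatic: Definition~\ref{defjacprop} \emph{assumes} bijectivity onto a ball, so invoking the Jacobian property for $\delta$ here is circular. You must prove surjectivity separately, e.g.\ by checking that for each $c$ with $\ord(c-\delta(a))\ge z+k$ (where $B=\pi_K^{z}\cO_K$) the map $x\mapsto x-(G-1)^{-1}(\delta(x)-c)$ is a contracting self-map of $B$ and applying Lemma~\ref{lem:Ban}.

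With these fixes your argument does go through: $\ord(\delta(a))\ge z+n-1\ge z+k$ (this inequality, using $k\le n-1$, is exactly where the exponent $n-1$ in hypothesis~(\ref{3}) enters), so $0\in\delta(B)$ and its unique preimage is $b_0$. Incidentally, your opening case split is vacuous: since $f_1,f_2\colon B\to B$ are bijections with the Jacobian property, $\ord(f_i')=0$ for both $i$ automatically, so $\ord(f_1')\ne\ord(f_2')$ cannot occur.
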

\begin{proof}
First we prove uniqueness. Suppose that $f_1(b)=f_2(b)$ for some $b\in B$. Replacing $f_i$ by $t\mapsto f_i( t +b)-f_i(b)$, we may suppose that  $b=0$ and that $f_i(0)=0$ for $i=1,2$. By the $n$-Jacobian property,
$$
\rv_n (f_i(x)) = \rv_n (f'_i) \cdot \rv_n (x)
$$
for all $x$ in $\cO_K$, and hence by (\ref{2}), $f_1(x)\neq f_2(x)$ for nonzero $x$ in $\cO_K$.

Next we prove existence.
Suppose by contradiction that no such $b_0$ exists.
By continuity of $f_1$ and $f_2$, this implies that there exists an upper bound
in $\ZZ$ on the order of $f_1(x)-f_2(x)$ for $x \in \ZZ$; consider the minimal such upper bound $\gamma$ and choose $b\in B$ with  $\ord(f_1(b)-f_2(b))=\gamma$.
In fact, we may suppose $b = a$, and
replacing $f_i$ by $t\mapsto f_i(t +b)$, we may moreover suppose that $b=0$.
Put $c:= f_1(0) - f_2(0)$ and $d:= f_1'(0) - f_2'(0)$. We have, by (\ref{1}),
$$
\ord(f_i(x) - f_i'(0) x - f_i(0) ) \geq \ord(x) + n
$$
for $i=1,2$ and all $x\in B$, and, subtracting,
\begin{equation}\label{4}
\ord(f_1(x) - f_2(x) - dx -c) \geq \ord(x) + n  
\end{equation}
for all $x$ in $B$. Take the unique $b'\in K$ with $ db'-c=0 $.  Since $\ord(d) < n$ by (\ref{1}) and (\ref{2}) and since $\ord(c)=\gamma$, one finds $\ord(b')>\gamma-n$, and thus from (\ref{3}) follows that $b'\in B$. Plugging in $b'$ for $x$ in (\ref{4}) one finds that  $\ord(f_1(b')-f_2(b'))>\gamma$, a contradiction to the choice of $\gamma$.
\end{proof}

We recall a Definition of \cite{CCL}.

\begin{def-prop}[Balls of cells, \cite{CCL}]\label{ballcell}
Let $Y$ be definable. Let $A\subset K\times Y$ be a
$1$-cell over $Y$. 
Then, for each
$(t,y)\in A$, there exists a unique maximal ball
$B_{t,y}$ containing $t$ and satisfying $B_{t,y}\times \{y\}\subset
A$, where
maximality  is under
 inclusion. For fixed $y_0\in Y$ we call the collection of balls
$\{B_{t,y_0}\}_{\{t\mid (t,y_0)\in A\}}$ the balls of the cell $A$
above $y_0$.
\end{def-prop}

\begin{defn}\label{thin}
Call a $1$-cell $A$ over $Y$ thin if the collection of balls of $A$ above any $y\in Y$ consists of at most one ball.
\end{defn}

Theorem~\ref{thm:simulPrep:basic} relies on both Lemmas \ref{lem:equal-v} and~\ref{lem:equal-rvn}. The idea for the applications of these lemmas is that they suppose a certain bad behavior on a ball in view of the desired conclusion of Theorem~\ref{thm:simulPrep:basic}, and then they yield some isolated special points. In our definable set-up, these isolated special points form a discrete definable subset; such sets are known to be finite, and moreover their size is uniformly bounded in families,
hence these finite sets single out a finite collection of balls, each of which corresponds to a thin cell. On these thin cells, we prove Theorem~\ref{thm:simulPrep:basic} separately, directly from the Jacobian property.

\begin{proof}[Proof of Theorem~\ref{thm:simulPrep:basic}]
By Proposition 3.12 of \cite{CCL} we may suppose that for each $j$ separately, there is a finite partition of $X$ into cells over $Y$ such that the restriction of $f_j$ to each cell in the partition is $0$-compatible with that cell. In fact, by using the $n$-Jacobian property of \cite{CLip} instead of only the $0$-Jacobian property, Proposition 3.12 also holds with $n$-compatibility instead of $0$-compatibility, with the same proof as in \cite{CCL}.
Moreover, by piecewise linearity results of Presburger functions on Presburger sets, see e.g.~\cite{pres}, we may suppose that $\ord (\partial f_j(t,y)/\partial t)$ depends linearly on $\ord (t-c(y))$, on each $1$-cell with center $c$ in the $j$-th partition of $X$, and this for each $j$.
The following statement now easily follows from the Cell Criterion 3.8 from \cite{CCL}. For any cell $A$ in the $j$-th partition, and any cell $B$ over $Y$ with center $d$ with  $B\subset A$ there exists a finite partition of $B$ into cells $B_i$, whose centers are the natural restrictions of $d$, and such that $f_{j|B_i}$ is $n$-compatible with $B_i$. We will use this statement five times to simplify the set-up.

Firstly, we may suppose that $X$ is a cell over $Y$ such that all the $f_j$ are $n$-compatible with $X$. If $X$ is a $0$-cell over $Y$, then we are already done (set $m_j := 0$),
so we may suppose that $X$ is a $1$-cell over $Y$. Write $d_j$ for the center of $X_{f_j}$ for each $j$.

Secondly, by the Classical Cell Decomposition Theorem and up to a finite partition of $X$, we may suppose that there exist fractional monomials $m_j$ on $X$ for each $j$ such that
\begin{equation}\label{fjdmj}
\rv_n(f_j-d_j) = \rv_n(m_j)
\end{equation}
holds on $X$.
We may moreover exclude the simple cases that $f_j-d_j$ or $m_j$ are constant.

Thirdly, by the simple form of fractional monomials (namely taking a definable choice out of at most $b$ roots), we may further suppose that also the $m_j$ are $n$-compatible with $X$.

Fourthly, we may suppose for each $j$ that either $X_{f_j}$ is included in $X_{d_j+m_j}$ or, vice versa, $X_{d_j+m_j}$ is included in $X_{f_j}$, by comparing the balls (in the sense of Definition~\ref{ballcell}) of the cells $X_{f_j}$ and $X_{d_j+m_j}$ and possibly modifying
the coefficients $e_j$ of the $m_j$.
Indeed, since $X_{f_j}$ and $X_{d_j+m_j}$ have the same center,
we can choose finitely many $r_{j,\nu} \in K^\times$ such that
for any $y \in Y$ and any ball $B$ of $X$ above $y$, there exists a $\nu$ such that
one of the balls $f(B)$ and $(d_j+r_{j,\nu}m_j)(B)$ is contained in the other one.

Now by Lemma~\ref{lem:equal-v} and fifthly, we can reduce to one of the following two cases: either $X$ is a thin cell, or the set $X_{f_j}$ equals $X_{d_j+m_j}$ for each $j$.
If $X$ is a thin cell, then, by the $n$-Jacobian property, we know that, for each individual $y\in Y$ such that $X_y$ is nonempty, $f_{jy}\colon X_y\to f_{jy}(X_y)\colon t\mapsto f_j(t,y)$ has the $n$-Jacobian property and hence, there exists a linear function $\ell_{jy}\colon X_y\to f_{jy}(X_y)\colon t\mapsto a_{jy}t+b_{jy}$ such that $\ell_{jy}$ and $f_{jy}$ are $n$-equicombatible with the ball $X_y$. By the definability of Skolem functions (also called definability of sections), we may suppose that the $a_{jy}$ and the $b_{jy}$ depend definably on $y$. Hence, we can take as our final $d_j+m_j$ the (linear) $(t,y)\mapsto a_{jy}t+b_{jy}$, that is, the definable functions $d_j(y)=b_{jy}$ on $Y$ and $m_j(t,y)=a_{jy}t$ on $X$ are as desired.

There only remains to treat the case that $X_{f_j}$ equals $X_{d_j+m_j}$ for each $j$ (and $X$ is not thin).
For this case, we will apply Lemma~\ref{lem:equal-rvn} in a similar way as we applied Lemma~\ref{lem:equal-v}
before. More precisely, by Lemma~\ref{lem:equal-rvn} we can exclude finitely many thin cells (each of which can be treated as before), such that
$rv_n(\partial f_j / \partial t) = rv_n(\partial m_j / \partial t)$ holds on the remaining part, and we are done.
For this application of Lemma~\ref{lem:equal-rvn}, it remains to ensure that condition (3)
of that lemma holds. Condition (3) of Lemma~\ref{lem:equal-rvn} can indeed be ensured by further partitioning $X$ if necessary and by
slightly modifying the coefficients $e_j$ of the monomials $m_j$, as we already did in fourthly.
\end{proof}

The following proposition specifies that one can take $C=1$ for the Lipschitz constant in Proposition 2.4 of \cite{CCL}.
\begin{prop}[Cells with $1$-Lipschitz continuous centers]\label{Lipcenter}
Let $Y$ and $X\subset K^m\times Y$ be definable. Then there
exist a finite partition of $X$ into definable parts
$A$ and for each part $A$ a coordinate projection
$$
\pi\colon K^m\times Y\to K^{m-1}\times Y
$$
such that, over $K^{m-1}\times Y$ along this projection $\pi$, the
set $A$ is a cell with center $c\colon \pi(A) \to K$ and such that moreover the
function
 $$
c(\cdot,y)\colon  (t_1,\ldots,t_{m-1})\mapsto
c(t_1,\ldots,t_{m-1},y)
 $$
is $1$-Lipschitz continuous on $\pi(A)_y$ for each $y\in Y$.
\end{prop}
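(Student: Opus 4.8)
The plan is to prove Proposition~\ref{Lipcenter} by induction on $m$, reducing the general case to the one-variable Preparation Theorem~\ref{thm:simulPrep:basic} via an analysis of the center functions appearing in a cell decomposition. The base case $m=1$ is essentially trivial: a cell over $Y$ in $K\times Y$ with center $c\colon\pi(A)\to K$ has $\pi(A)\subset K^0\times Y$, so $c(\cdot,y)$ is a function on a single point (or empty), hence vacuously $1$-Lipschitz. For the inductive step, the idea is as follows. Start with $X\subset K^m\times Y$; by the classical $p$-adic cell decomposition (Theorem~\ref{thm:CellDecomp}), applied with respect to some coordinate $t_m$ and with base $Y'\subset K^{m-1}\times Y$, partition $X$ into cells $A$ over $Y'$ with centers $c\colon Y'\to K$, boundaries $\alpha,\beta$, etc. By the inductive hypothesis applied to $Y'$ (now playing the role of the ``$X$'' in dimension $m-1$), we may further refine so that $Y'$ itself is a cell over some $K^{m-2}\times Y$ with a $1$-Lipschitz center. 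The task is then to arrange that the new center $c\colon Y'\to K$ is also $1$-Lipschitz continuous in the first $m-1$ coordinates. This is exactly where Theorem~\ref{thm:simulPrep:basic} enters: treating $c$ as a one-variable function of the last coordinate $t_{m-1}$ of the base (with the remaining coordinates and $Y$ as parameters), we approximate $c$ piecewise by $d+m$ where $m$ is a fractional monomial and $d$ is a center, with the derivative approximated as well.

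The key technical point will be translating the derivative-approximation statement of Theorem~\ref{thm:simulPrep:basic} into a Lipschitz bound. A function $c(\cdot,y)$ of one variable on a ball-like domain is $1$-Lipschitz precisely when $\ord(c'(\cdot,y))\geq 0$ wherever the derivative exists, because on a ball the Jacobian property forces $\ord(c(x)-c(y))=\ord(c')+\ord(x-y)$. So I would aim to partition the base further into cells on which $\ord(\partial c/\partial t_{m-1})\geq 0$ (the ``slope $\geq 0$'' pieces), where $c$ is automatically $1$-Lipschitz in $t_{m-1}$; on the complementary pieces where $\ord(\partial c/\partial t_{m-1})<0$, I would perform a coordinate swap — reparametrize using the value of $c$ (or $c-d$, or the fractional monomial $m$) as the new coordinate in place of $t_{m-1}$ — so that in the new coordinates the center becomes the inverse of a function with negative slope, hence a function with nonnegative slope, thus $1$-Lipschitz. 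This swap is legitimate because Proposition~\ref{Lipcenter} allows us to choose the coordinate projection $\pi$ freely for each part $A$. Combining this with the $1$-Lipschitz continuity of the inductively-obtained center of $Y'$ and checking that $1$-Lipschitz maps compose and restrict well (and that the ``staircase'' of centers $c(t_1,\dots,t_{m-1},y) = d(t_1,\dots,t_{m-2},y) + m(t_1,\dots,t_{m-1},y)$ built from a $1$-Lipschitz $d$ and a fractional monomial $m$ with nonnegative exponent is again $1$-Lipschitz) finishes the step.

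The main obstacle I anticipate is managing the interaction between the fractional monomial $m(t_{m-1},\cdot)$ and the already-$1$-Lipschitz center $d$ inherited from lower dimension, i.e.\ showing that a fractional monomial $e(y')(t_{m-1}-d(t_{m-2}\text{-stuff}))^{a/b}$ with $a/b\geq 0$ is $1$-Lipschitz in $(t_1,\dots,t_{m-1})$ on the relevant cell and that the sum with $d$ remains $1$-Lipschitz. This requires a careful estimate: on a cell the monomial's derivative in $t_{m-1}$ has controlled order, and one needs $\ord e(y') + (a/b-1)\ord(t_{m-1}-\cdots) \geq 0$ uniformly, which will hold only after a further partition into cells where the boundaries $\alpha,\beta$ constrain $\ord(t_{m-1}-\cdots)$ appropriately — and one must also bound the partial derivatives in the \emph{other} variables $t_1,\dots,t_{m-2}$, which come in through $d$ and through $e(y')$. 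Handling the partials in those earlier variables, using that $d$ is already $1$-Lipschitz and that $e$ can be taken with $\ord(e)$ constant (or controlled) after partitioning, is the delicate bookkeeping; I would use the non-Archimedean ultrametric strongly, since there $\max$ of $1$-Lipschitz contributions is again $1$-Lipschitz, which makes the sum estimate $\ord(c(x)-c(x'))\geq\ord(x-x')$ come down to checking each coordinate-direction separately. Once that estimate is in hand, the rest is routine assembly.
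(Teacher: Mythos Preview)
Your coordinate-swap idea is exactly the mechanism the paper uses, but your argument has a genuine gap in how you pass from partial-derivative bounds to global $1$-Lipschitz continuity on $\pi(A)_y$.

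In the paper, Proposition~\ref{Lipcenter} and Theorem~\ref{rellip} are proved by a \emph{joint} induction: to prove Proposition~\ref{Lipcenter} for $m$, one first arranges (via coordinate swaps, as you suggest) that $|\partial c/\partial t_i|\le 1$ for every $i\le m-1$, and then invokes Theorem~\ref{rellip} for $m-1$ to upgrade this local $1$-Lipschitz condition on $c(\cdot,y)$ to a piecewise global one. You are trying to bypass this invocation of Theorem~\ref{rellip}, and that is where the argument breaks.

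Concretely, two problems. First, when you write $c = d + m$ via the one-variable Preparation Theorem in the variable $t_{m-1}$, the function $d$ is the center of the image cell $A_c$; it has no a priori relation to the $1$-Lipschitz center of $Y'$ that you obtain from the inductive hypothesis on Proposition~\ref{Lipcenter}. So your assertion that $d$ is already $1$-Lipschitz in $(t_1,\dots,t_{m-2})$ is unjustified, and likewise the coefficient $e$ of $m$ has no controlled Lipschitz behaviour in those variables. Second, even if you succeeded in bounding all the partial derivatives of $c$ by $1$, your ``ultrametric coordinate-by-coordinate'' argument tacitly assumes you can move from $x$ to $x'$ inside $\pi(A)_y$ by changing one coordinate at a time. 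On a general definable subset of $K^{m-1}$ this is not available; establishing that one can nonetheless partition so that global $1$-Lipschitz continuity holds is precisely the content of Theorem~\ref{rellip} in dimension $m-1$. In other words, what you call ``routine assembly'' is the main theorem of the paper one dimension down, and it must be fed into the induction rather than re-derived from scratch.
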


As in \cite{CCL}, we prove Proposition~\ref{Lipcenter} and Theorem~\ref{rellip} by a
joint induction on $m$. More precisely, assuming both
Proposition~\ref{Lipcenter} and Theorem~\ref{thm:simulPrep:basic}
for $\le m - 1$, we first prove Proposition~\ref{Lipcenter} for $m$
and then Theorem~\ref{thm:simulPrep:basic} for $m$.
Since Proposition~\ref{Lipcenter} is trivial for $m = 1$, to start the induction
it suffices to prove Theorem~\ref{thm:simulPrep:basic} for $m = 1$.

\begin{proof}[Proof of Proposition~\ref{Lipcenter} for $m$ using the induction hypothesis.]
This proof is exactly as the proof of Proposition 2.4 in \cite{CCL}, invoking our Theorem~\ref{rellip} for $m-1$ instead of Theorem 2.3 of \cite{CCL} to control the Lipschitz constants, and where, in the second last sentence of the proof of Proposition 2.4 of \cite{CCL}, one replaces ``are bounded in norm'' by ``have norms $\leq 1$'' and one further replaces $C$ by $1$.

Roughly, the idea is the following: start with any cell decomposition of $X$ along any coordinate
projection $\pi$. Consider a cell $A$ with center $c\colon \pi(A) \to K$. If
$|\partial c/\partial t_i| \le 1$ for all $i \le m - 1$, then we are done; otherwise, interchange
the role of $t_i$ and the projection coordinate. We would like to say that $A$ is still
a cell using this different coordinate projection, and that the graph of the center is the same as before.
Of course, this is not true in general, but by cutting $A$ into pieces, this can be easily achieved if $A$ was a $0$-cell and it can be achieved with some work if $A$ was a $1$-cell.
\end{proof}

\begin{proof}[Proof of Theorem~\ref{rellip} for $m=1$]
We are given $\varepsilon>0$, $Y$ a definable set, and
$f\colon X\subset K\times Y\to K$ a definable function such that
for each $y\in Y$ the function $f(\cdot,y)\colon x\mapsto f(x,y)$ is
locally $\varepsilon$-Lipschitz continuous on its natural domain
$X_y:=\{x\in K\mid (x,y)\in X\}$.
 Use Theorem~\ref{thm:simulPrep:basic} to partition $X$ into finitely many cells
$X_i$ over $Y$.
By working piecewise we may suppose that $X=X_1$ and that $X$ and $X_f$ are $1$-cells over $Y$.
By the Jacobian property, $f(\cdot,y)$ is $C^1$ and by local $\varepsilon$-Lipschitz
continuity,
\begin{equation}\label{jace} |\partial f(x,y)/\partial x
|\leq \varepsilon
\end{equation}
for all $(x,y)\in X$. Write $c$ for the center of $X$ and $d$ for the center of $X_f$.
Since a function $g\colon A\subset K\to K$ is $\varepsilon$-Lipschitz continuous if
and only if $A\to K\colon x\mapsto g(x+a)+b$ is $\varepsilon$-Lipschitz continuous
for any constants $a,b\in K$, we may suppose, after translating, that $c$
and $d$ are identically zero.  Thus, we have that $f$ and $m$ are $n$-equicompatible with $X$ for a certain fractional monomial $m$.

Now fix $y\in Y$. Take $(x_1,y)$ and $(x_2,y)$ in $X$. If $x_1$ and
$x_2$ both lie in the same ball $B_{x_1,y}$ above $y$, then
\begin{equation}\label{m1jac}
|( \partial f(x_1,y)/\partial x)  \cdot (x_1-x_2)|= | f(x_1,y) -
f(x_2,y) |
\end{equation}
by the Jacobian property and we are done by (\ref{jace}).

Next suppose that $B_{x_1,y}$ and $B_{x_2,y}$  are two different
balls. By our assumption that $c$ and $d$ are identically zero, we can
write
\begin{equation}\label{b1}
 B_{x_i,y} = \{x\in K\mid \ord (x)=a_{x_i,y},\ \ac_\ell(x)=\ac_\ell\lambda \}
\end{equation}
for $i=1,2$ and some $\ell$ and $a_{x_i,y}$,
and likewise for their images under $f(\cdot,y)$ for $i=1,2$:
\begin{equation}\label{b2}
\big(f(\cdot,y)\big)(B_{x_i,y}) = 
\{z\in K \mid \ord (z)=b_{x_i,y},\ \ac_{\ell'}(z)=\ac_{\ell'}\mu
 \}.
\end{equation}
From these descriptions we get the inequalities:
\[
\ord(f(x_1,y)- f(x_2,y)) = \min_{i=1,2} (b_{x_i,y})
\]
and
\[
\ord(x_1-x_2) = \min_{i=1,2} (a_{x_i,y} ).
\]
On the other hand by the
Jacobian
property (d) one finds, by comparing the sizes of the balls (\ref{b1}) and (\ref{b2}),
\[
\ell +  \ord (\partial f(x_i,y)/\partial x)  + a_{x_i,y}  =  \ell' +
b_{x_i,y}
\]
for $i=1,2$. Putting this together with (\ref{jace}) yields:
\begin{equation}\label{c1}
 |f(x_1,y)- f(x_2,y)| = \max_{i=1,2} q_K^{-b_{x_i,y}}
 \leq   \varepsilon  q_K^{\ell'-\ell}   \max_{i=1,2} q_K^{-a_{x_i,y}}  =  \varepsilon q_K^{\ell'-\ell} |x_1-x_2|.
\end{equation}
If $\ell' - \ell\leq 0$ then we are done by (\ref{c1}).
Also if the exponent of the fractional monomial $m$ is $1$, then, by the $n$-equicompatibility of $m$ and $f$ and the linearity of $m$, one must have $\ell=\ell'$ and the statement follows from (\ref{c1}).
Finally suppose that the exponent of $m$ is unequal to $1$ and that $\ell' - \ell > 0$.
Then $|\partial f(x,y)/\partial x
|$ is not constant on $X$, since it is equal to $|\partial m(x,y)/\partial x
|$. Hence, excluding finitely many thin cells from $X$, it follows that we can suppose that also
\begin{equation}\label{jace2}
 |\partial f(x,y)/\partial x
|\leq \varepsilon q_K^{-(\ell'-\ell)}
\end{equation}
holds for all $(x,y)\in X$. Now we are done by a similar calculation as in (\ref{c1}), using (\ref{jace2}) instead of (\ref{jace}). Each of the remaining thin cells can be treated as separate part and the statement on such a part follows again by (\ref{m1jac}) and (\ref{jace}).
\end{proof}
\begin{proof}[Proof of Theorem~\ref{rellip} for general $m>1$]
This proof is the same as the proof of Theorem 2.3 for general $m>1$ of \cite{CCL}; we indicate the changes to be made. (The changes are all related to the Lipschitz constants.) One should invoke our Proposition~\ref{Lipcenter} for $m-1$ instead of Proposition 2.4 of \cite{CCL} and then use $C=1$. Then, in the second sentence after (4.1.2) of \cite{CCL}, one should note that the bi-Lipschitz transformation which replaces $x_m$ by $x_m-c (\hat x,y)$ but which preserves the other coordinates, is an isometry and from the statement ($*$) of the proof in \cite{CCL} on, one can take $C=\varepsilon$.
Further the proof is the same as the proof of Theorem 2.3 in \cite{CCL}.
In a nutshell, the idea in \cite{CCL} is to partition the domain into cells in which one can move along lower-dimensional subsets for which the induction hypothesis can be used.
\end{proof}

\bibliographystyle{amsplain}

\end{document}